\theoremstyle{plain}
\newtheorem{thm}{Theorem}[section]
\newtheorem{prop}[thm]{Proposition}
\newtheorem{lemma}[thm]{Lemma}
\newtheorem{cor}[thm]{Corollary}
\theoremstyle{definition}
\newtheorem{defn}{Definition}[section]
\newtheorem{rmk}{Remark}[section]
\newcommand{\barMg}{\overline{\mathcal{M}}_g}
\newcommand{\C}{\mathbb{C}}
\newcommand{\R}{\mathbb{R}}
\newcommand{\M}{\mathcal{M}}
\renewcommand{\H}{\mathcal{H}}
\newcounter{bencomments}
\newcounter{jenyacomments}
\title{Coarse density of subsets of $\M_g$}  
\author{Benjamin Dozier, Jenya Sapir}
\date{}
\begin{document}
\maketitle
%\jenya{I put a .bib file into the Sources folder. Making a note, in case we want citations here.}

\begin{abstract}
  Let $\M_g$ be the moduli space of genus $g$ Riemann surfaces. We show that an algebraic subvariety of $\M_g$ is coarsely dense with respect to the Teichm\"uller metric (or Thurston metric) if and only if it is all of $\M_g$.  We apply this to projections of $\operatorname{GL}_2(\R)$-orbit closures in the space of abelian differentials.  Moreover, we determine which strata of abelian differentials have coarsely dense projection to $\M_g$.  
  %This implies the same result for closures of projections of strata of Abelian differentials to $\M_g$, and for projections of $\operatorname{GL}_2(\R)$ orbit closures in general.
\end{abstract}

\section{Introduction}
\label{sec:introduction}
% Include:

% -[jenya] acknowledgements

% -list of tags for arxiv

% -Motivation for caring about strata projections?

% -[DONE] scaffolding for thurston stuff

% -[DONE] get rid of 3 cases in thurston rmk, replace abstract functions by log

% -[DONE] Idea of proof, with polynomial example

% -[DONE] Remark about Thurston metric

% -[DONE] Change aff inv mfld to orbit closure, motivate orbit closures  

% -[DONE] Change A to $\Omega \M_g$

% -[DONE] Move remark to after Prop 2.2

% -[DONE] abstract

% -[DONE] Defn of moduli space, teich metric, coarse density

% -[DONE] Why coarse density does not imply closure in boundary is everything

%%%

The moduli space $\M_g$ of closed Riemann surfaces of genus $g$ can be equipped with a natural Finsler metric, the Teichm\"uller metric, which measures the dilatation of the best quasiconformal map between two Riemann surfaces.  In this paper we study some \emph{coarse} metric properties of various natural subsets of $\M_g$.  We say that a subset $S\subset \M_g$ is \emph{coarsely dense} with respect to the Teichm\"uller metric if there is some $K$ such that every point in $\M_g$ is within distance $K$ in the Teichm\"uller metric from some point of $S$.   

\begin{thm}
    \label{thm:main}
  Let $V\subset \M_g$ be an algebraic subvariety.  Then $V$ is coarsely dense in $\M_g$ with respect to the Teichm\"uller metric  iff $V=\M_g$.  
\end{thm}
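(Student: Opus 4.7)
The ``if'' direction is immediate, so I will focus on showing that if $V\subsetneq\M_g$ is a proper algebraic subvariety then it is not coarsely dense. The plan is to produce a sequence of points in $\M_g$ whose Teichm\"uller distance to $V$ tends to infinity, by exploiting the geometry of $\barMg$ near a maximally degenerate stable curve. I would pick a pants decomposition $\sigma=\{\gamma_1,\dots,\gamma_{3g-3}\}$ and the associated $0$-dimensional boundary stratum $p\in\barMg$, and work in plumbing coordinates $(q_1,\dots,q_{3g-3})$ on a polydisk neighborhood $U$ of $p$ (passing to a finite \'etale cover to handle the orbifold structure). Since $\bar V$ is a proper algebraic subvariety of the irreducible projective variety $\barMg$, it has empty interior, so locally near $p$ it is contained in the zero set of some polynomial $f(q_1,\dots,q_{3g-3})\not\equiv 0$.

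The key geometric input is Minsky's product regions theorem: for $\epsilon>0$ small, the thin region $\{X\in\M_g:\ell_{\gamma_i}(X)<\epsilon\ \text{for all}\ i\}$ is quasi-isometric with the Teichm\"uller metric to a product of $3g-3$ hyperbolic cusps under the sup metric. Writing $s_i:=\log(1/|q_i|)$, the depth in the $i$-th cusp is $D_i\asymp\log(1/\ell_{\gamma_i})\asymp\log s_i$ (using the standard plumbing estimate $\ell_{\gamma_i}\asymp 1/s_i$). Consequently, a Teichm\"uller ball of radius $K$ about a deep point with depths $(D_i)$ corresponds, up to bounded error, to the multiplicative box $\{s'_i\in[s_ie^{-K},s_ie^K]\}$.

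To find a point far from $V\cap U$, let $\alpha^*\in\mathrm{supp}(f)$ be lex-minimal and take the $s_i$ asymmetrically, with each $s_i/s_{i+1}$ larger than a big constant $C$. In this regime $a_{\alpha^*}q^{\alpha^*}$ strictly dominates $f$: for any other $\alpha\in\mathrm{supp}(f)$, letting $i_0$ be the first index where they differ (so $\alpha_{i_0}>\alpha^*_{i_0}$),
\[ (\alpha-\alpha^*)\cdot s \;\geq\; s_{i_0}-N\sum_{j>i_0}s_j \;\geq\; (C-O(N))\,s_{i_0+1}, \]
where $N$ bounds $|\alpha_j-\alpha^*_j|$; thus $|a_\alpha q^\alpha|$ is exponentially smaller than $|a_{\alpha^*}q^{\alpha^*}|$, so $f$ does not vanish in this region. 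The entire sup-$K$ box around our chosen point remains inside provided the ratios $s_i/s_{i+1}$ beat $C'e^{2K}$ (for a slightly smaller $C'$), which forces $d_T(x,V\cap U)\geq K$.

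For the remainder $V\setminus U$: any such $y$ satisfies $\ell_{\gamma_i}(y)\geq\ell_0>0$ for some $i$, so by the bound $\mathrm{Ext}_y(\gamma_i)\geq\ell_{\gamma_i}(y)^2/\mathrm{Area}$ its extremal length is bounded below, while $\mathrm{Ext}_x(\gamma_i)\asymp 1/s_i\leq 1/s_n$; Kerckhoff's formula then gives $d_T(x,y)\gtrsim\tfrac12\log s_n$, which is also large once $s_n$ is chosen large. Combining yields $d_T(x,V)\geq K$ for arbitrarily large $K$, so $V$ is not coarsely dense. The main technical hurdle is pinning down the constants in Minsky's quasi-isometry and the plumbing-to-hyperbolic-length asymptotic; these are standard, but the asymmetric-cone estimate requires careful bookkeeping so that the multiplicative $K$-box really stays inside the dominance region.
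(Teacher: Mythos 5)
Your strategy is essentially the contrapositive of the paper's argument: both work in plumbing coordinates near a maximally degenerate point, single out the lexicographically extremal monomial of the local defining equation of $V$, and show that it dominates the remaining terms when the plumbing parameters are hierarchically separated. Your direct formulation needs two extra ingredients that the by-contradiction version avoids: control of the shape of a Teichm\"uller $K$-ball deep in the thin part (for which Wolpert's lemma already gives the one-sided bound you need; Minsky's product regions theorem is heavier than necessary), and the separate treatment of $V\setminus U$ via extremal lengths and Kerckhoff's formula. The paper instead assumes coarse density, uses Wolpert's lemma to drag a sequence of points of $V$ into the plumbing chart with prescribed length profiles, and only ever has to evaluate $f$ along that sequence. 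These are stylistic differences and either route can be made to work.

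The genuine gap is your assertion that near $p$ the variety $V$ is contained in the zero set of a \emph{polynomial} $f(q_1,\dots,q_{3g-3})$. The plumbing coordinates are analytic, not algebraic, coordinates on $\barMg$ (the compatibility of the two analytic structures is the Hubbard--Koch input), so all one gets is that $V$ is locally cut out by convergent power series in the $q_i$, whose supports are infinite in general. Your dominance estimate invokes a uniform bound $N$ on $|\alpha_j-\alpha^*_j|$ over the support of $f$; no such $N$ exists for a power series, so the inequality $(\alpha-\alpha^*)\cdot s\ge s_{i_0}-N\sum_{j>i_0}s_j$ is not available as stated. The pointwise estimate can be repaired (since $\alpha_j\ge 0$, one has $(\alpha_j-\alpha^*_j)\ge-\alpha^*_j$, so the loss at indices $j>i_0$ is controlled by $|\alpha^*|$ alone), but you must then still sum the tail over infinitely many multi-indices, which requires Cauchy-type coefficient bounds $|c_\alpha|\le M r^{-|\alpha|}$ and a choice of point well inside the polydisk of convergence. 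This is exactly the technical heart of the paper's proof (the bound (\ref{eq:cauchy}) and the decomposition of the tail into the sums $S_i$), so the gap is fixable along known lines, but as written your argument does not close.
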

A similar statement holds if instead of the Teichm\"uller metric, we use the Thurston metric. See page \pageref{sec:ThurstonIntro} for more details.

Our proof uses the compactification $\barMg$ by stable Riemann surfaces.  Intuitively, coarse density of a subset $S\subset \M_g$ should be related to the topological closure $\bar S \subset \barMg$ containing a large portion of the boundary.  It is not true, however, that coarse density of an arbitary $S$ implies that $\bar S$ contains all of $\barMg - \M_g$.\footnote{For instance, take $\bar X$ a stable Riemann surface that is not maximally degenerate, i.e. some piece of $\bar X$ is not a thrice-punctured sphere.  The boundary stratum of $\bar X$ has dimension at least $1$, and so we can take a non-trivial small neighborhood $U$ of $\bar X$ in this boundary stratum.  Then construct $S$ by starting with all stable nodal surfaces (in every boundary stratum), except those in $U$, and smoothing out all the nodes in all possible ways in the plumbing construction.  This $S$ will be coarsely dense, but $\bar X \not\in \bar S$. The point is that small neighborhoods of $\bar X$ in $\barMg$ intersected with $\M_g$ do not contain arbitrarily large balls (though they do have infinite diameter). }
Instead, our proof uses the observation that a coarsely dense subset must contain a sequence of Riemann surfaces with $3g-3$ distinct curves all becoming hyperbolically pinched, but at wildly different rates.  On the other hand, surfaces on a proper subvariety $V$ must satisfy an analytic equation in plumbing coordinates -
%\jenya{plumbing + coordinates}
this implies a relation among the hyperbolic lengths of pinching curves that prevents them from going to zero at wildly different rates.

\paragraph{Strata and affine invariant manifolds.}

We now describe several corollaries concerning natural subsets arising in Teichm\"uller dynamics that motivated the general theorem above.  

%\jenya{Adding defn of Abel. diffs + epsilon}
The original motivation came from considering the space of holomorphic Abelian differentials $\Omega \M_g$. 
%\jenya{Name for space of Abel. diffs?}
This is the space consisting of pairs $(X,\omega)$, where $X$ is a genus $g$ Riemann surface, and $\omega$ is a one-form on $X$ that can be written as $f dz$ in local coordinates, where $f$ is a holomorphic function. There is a natural projection $$\pi: \Omega \M_g \to \M_g$$
that takes $(X,\omega)$ to $X$. The theorem allows us to understand something about the coarse geometry of this projection in the following sense.

The space $\Omega \M_g$ naturally decomposes into strata according to the multiplicities of the zeros of $\omega$.  For each partition $\kappa = (\kappa_1,\ldots,\kappa_n)$ of $2g-2$, we get a stratum $\H(\kappa)$ of differentials with $n$ zeros of orders $\kappa_1, \dots, \kappa_n$.

% let $\H(\kappa)$ denote the stratum consisting of Riemann surfaces of genus $g$ equipped with a holomorphic $1$-form with zero orders given by $\kappa = (\kappa_1,\ldots,\kappa_n)$, and 
% Let $\H$ be a connected component of $\H(\kappa)$.   
%\jenya{removed mention of conn comp. Do we need it?}

\begin{cor}
    \label{cor:coarse-dense-dim}
  Let $\mathcal{H}(\kappa)$ be a stratum.  Then $\pi(\mathcal{H}(\kappa))$ is coarsely dense in $\M_g$ with respect to the Teichm\"uller metric iff
  \[\dim \mathbb{P}\H(\kappa) \ge 3g-3.\]
\end{cor}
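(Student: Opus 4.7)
Writing $\kappa = (k_1, \ldots, k_n)$, the stratum $\H(\kappa)$ has complex dimension $2g+n-1$ by the period-coordinate count, so $\dim \mathbb{P}\H(\kappa) = 2g+n-2$, and the hypothesis $\dim \mathbb{P}\H(\kappa) \ge 3g-3$ is equivalent to $n \ge g-1$. The projection $\pi$ factors through $\mathbb{P}\H(\kappa)$ (scaling $\omega$ preserves $X$), and by Chevalley's theorem $\pi(\H(\kappa))$ is a constructible subset of $\M_g$ of complex dimension at most $2g+n-2$. For the direction ``$\dim \mathbb{P}\H(\kappa) < 3g-3 \implies \pi(\H(\kappa))$ is not coarsely dense,'' the Zariski closure $V$ of $\pi(\H(\kappa))$ is a proper algebraic subvariety of $\M_g$ by dimension reasons, so Theorem \ref{thm:main} shows $V$, and hence its subset $\pi(\H(\kappa))$, is not coarsely dense.

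For the converse, I would prove the stronger statement that $\pi(\H(\kappa))$ is analytically dense in $\M_g$; since it is constructible, this reduces to showing $\dim \pi(\H(\kappa)) = 3g-3$, i.e.\ that $\pi : \mathbb{P}\H(\kappa) \to \M_g$ is dominant. The fiber over $X$ is identified with the locus in the canonical linear system $|K_X| \cong \mathbb{P}^{g-1}$ of effective divisors of shape $\kappa$, or equivalently (for non-hyperelliptic $X$) with hyperplanes meeting the canonically embedded curve $X \hookrightarrow \mathbb{P}^{g-1}$ with contact orders $k_1, \ldots, k_n$. Each contact of order $k_i$ at a varying point imposes $k_i - 1$ independent conditions, so the generic fiber has codimension $\sum(k_i-1) = 2g-2-n$ in $\mathbb{P}^{g-1}$ and hence dimension $(g-1)-(2g-2-n) = n-g+1 \ge 0$. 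Therefore $\dim \pi(\H(\kappa)) = (2g+n-2)-(n-g+1) = 3g-3$, so $\pi(\H(\kappa))$ is Zariski-dense, contains a Zariski-open and thus analytically dense subset of $\M_g$, and is coarsely dense.

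The main technical obstacle is making the fiber-dimension calculation rigorous: one must verify (a) the fiber is non-empty for some $X$ (so the image is non-empty), and (b) the contact conditions cut out a subvariety of the expected codimension $2g-2-n$ in a generic fiber, not a larger-dimensional one. Point (a) follows from the classical non-emptiness of strata $\H(\kappa)$ for any valid partition of $2g-2$. Point (b) is most cleanly established by viewing $\mathbb{P}\H(\kappa)$ as a codimension-$(2g-2-n)$ stratum of the universal canonical linear system $\mathcal{P} \to \M_g$ (of total dimension $4g-4$), matching the codimension of the shape-$\kappa$ locus in a single fiber $|K_X| \cong \mathbb{P}^{g-1}$; combined with (a), a generic fiber of $\pi$ has the expected dimension.
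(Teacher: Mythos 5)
The first direction of your argument (coarse density $\Rightarrow$ dimension bound, via Theorem \ref{thm:main} applied to the Zariski closure of the image) is exactly the paper's proof and is fine.

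The second direction has a real gap. You reduce, correctly, to showing that $\pi:\mathbb{P}\H(\kappa)\to\M_g$ is dominant, and you attempt to establish this by computing the ``expected'' dimension of the generic fiber and concluding $\dim\pi(\H(\kappa))=(2g+n-2)-(n-g+1)=3g-3$. But the dimension count you invoke only runs in the wrong direction. Viewing the fiber over $X$ as a shape-$\kappa$ locus inside $|K_X|\cong\mathbb{P}^{g-1}$ cut out by $\sum(k_i-1)=2g-2-n$ conditions gives, by the standard projective-intersection inequality, only a \emph{lower} bound $\dim(\text{fiber})\ge n-g+1$ wherever the fiber is nonempty. Feeding a lower bound on fiber dimension into the fiber-dimension theorem yields an \emph{upper} bound $\dim\overline{\pi(\H(\kappa))}\le 3g-3$, not the needed lower bound. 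To get dominance you must show the contact conditions are actually independent over a generic $X$ — i.e.\ that the generic fiber has dimension \emph{exactly} $n-g+1$ — and your stated justification (``$\mathbb{P}\H(\kappa)$ has codimension $2g-2-n$ in $\mathcal{P}$, matching the codimension in a single fiber'') is just a restatement of the expected count, not a proof that the actual and expected dimensions agree.

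This independence/transversality statement is genuinely nontrivial: the paper does not reprove it, but instead cites Gendron (Theorem 1.3 or 5.7) and Chen (Proof of Proposition 4.1), who show that when $\dim\mathbb{P}\H(\kappa)\ge 3g-3$ the projection of the full stratum contains a Zariski open subset of $\M_g$. The paper also flags a warning sign that your heuristic glosses over: the connected component $\H^{\mathrm{even}}(2,\ldots,2)$ satisfies the dimension hypothesis yet does \emph{not} dominate $\M_g$. This shows that the shape-$\kappa$ locus in $|K_X|$ can contain pieces of higher-than-expected dimension (or can be entirely absent over generic $X$ on some components), so an argument that simply asserts generic independence of the contact conditions must be justified with care, and the conclusion only holds for the full stratum rather than componentwise. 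You should replace your dimension heuristic with a citation to this dominance theorem.
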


\begin{proof}
  If $\pi(\H(\kappa))=\pi(\mathbb{P}\H(\kappa))$ is coarsely dense, then applying Theorem \ref{thm:main} with $V$ equal to the Zariski closure of  $\pi(\mathbb{P} \H(\kappa))$, we get that $V=\M_g$.  So, using the fact that the dimension of a quasiprojective variety is at least that of the Zariski closure of its image under an algebraic map, we get 
  $$\dim \mathbb{P}\H (\kappa) \ge \dim V = 3g-3.$$
  
  If $\dim\mathbb{P}\H(\kappa) \ge 3g-3$, then by \cite{Gendron18} (Theorem 1.3 or 5.7) or \cite{Chen10} (Proof of Proposition 4.1), $\pi(\H(\kappa))$ contains a Zariski open set of $\M_g$, and in particular is coarsely dense.  (Note that this result is only true for the whole stratum; the connected component $\H^{even}(2,\ldots,2)$ satisfies the dimension condition, but does not dominate a Zariski open set in $\M_g$.) 
\end{proof}

There is a natural action of $\operatorname{GL}_2(\R)$ on each stratum $\H(\kappa)$ given by representing a surface in this stratum as a union of polygons in the plane, and transforming the polygons by the matrix.   The (topological) closures of orbits under this action are of paramount importance for understanding both dynamics on individual surfaces and the structure of the Teichm\"uller metric.  

\begin{cor}
  Let $\M = \overline{\operatorname{GL}_2(\R) (X,\omega)}$ be an orbit closure in a stratum $\H(\kappa)$.  Then $\pi(\mathcal{\M})$ is coarsely dense in $\M_g$ with respect to the Teichm\"uller metric iff
  \[\dim \pi(\M) \ge 3g-3.\]
Here we take dimension of the Zariski closure.  
\end{cor}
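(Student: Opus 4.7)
The plan is to mirror the proof of Corollary \ref{cor:coarse-dense-dim}, using Theorem \ref{thm:main} for the forward direction and algebraicity of orbit closures for the reverse.

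For the forward direction, suppose $\pi(\M)$ is coarsely dense, and let $V \subseteq \M_g$ be the Zariski closure of $\pi(\M)$. Since $V \supseteq \pi(\M)$, the subvariety $V$ is itself coarsely dense, so Theorem \ref{thm:main} gives $V = \M_g$, and hence $\dim \pi(\M) = \dim V = 3g-3$.

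For the reverse direction, I would invoke Filip's theorem that every $\operatorname{GL}_2(\R)$-orbit closure $\M$ inside a stratum $\H(\kappa)$ is a quasi-projective subvariety (cut out by linear equations in period coordinates). In particular, $\pi \colon \M \to \M_g$ is a morphism of algebraic varieties, so by Chevalley's theorem $\pi(\M)$ is a constructible subset of $\M_g$. The hypothesis $\dim \pi(\M) \geq 3g-3$ forces the Zariski closure of $\pi(\M)$ to equal $\M_g$, since $\M_g$ is irreducible of dimension $3g-3$. A constructible subset of an irreducible variety whose Zariski closure is the whole variety must contain a nonempty Zariski open subset: writing $\pi(\M) = \bigcup_i (U_i \cap Z_i)$ as a finite union of locally closed pieces, irreducibility forces some $Z_{i_0} = \M_g$, and then $U_{i_0} \subseteq \pi(\M)$ is a nonempty Zariski open subset of $\M_g$. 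Its complement is a proper subvariety, hence nowhere dense in the analytic topology, so $\pi(\M)$ is analytically dense and in particular coarsely dense.

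The principal new ingredient compared to Corollary \ref{cor:coarse-dense-dim} is Filip's algebraicity theorem; without it, we would only know that $\pi(\M)$ is a real-analytic set, and Zariski density of the image would not automatically upgrade to the analytic density needed to deduce coarse density. Once algebraicity is in hand, the rest of the argument is essentially formal.
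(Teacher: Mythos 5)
Your proof is correct and follows essentially the same route as the paper: both directions rest on Theorem \ref{thm:main} together with the Eskin--Mirzakhani--Mohammadi/Filip algebraicity of orbit closures and the resulting constructibility of $\pi(\M)$. The only cosmetic difference is that you extract a nonempty Zariski open subset of $\M_g$ inside $\pi(\M)$ to conclude analytic (hence coarse) density, whereas the paper observes that constructibility makes the Zariski and topological closures of $\pi(\M)$ coincide and that coarse density is unaffected by passing to the topological closure.
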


\begin{proof}
 By work of Eskin-Mirzakhani-Mohammadi (\cite{emm15}, Theorem 2.1), an orbit closure $\M$ is an affine invariant manifold, and by Filip (\cite{Filip16}, Theorem 1.1) every affine invariant manifold is quasiprojective.  Hence the image $\pi(\M)$ under the algebraic map $\pi$ is a constructible set.  It follows that the Zariski and topological closures of $\pi(\M)$ coincide - call this closure $V$.  Since coarse density is not affected by taking topological closure, $\pi(\M)$ is coarsely dense iff $V$ is coarsely dense.  By Theorem \ref{thm:main}, $V$ is coarsely dense iff $\dim V \ge 3g-3$.  

% Q: Is $\pi(\M)$ automatically quasiprojective?  
% We show first that $\pi(\M)$ is a subvariety of $\M_g$. Note that $\M$ is a quasiprojective subvariety of the Hodge bundle $\Omega \M_g$ since $\H(\kappa)$ is a quasiprojective subvariety of $\Omega \M_g$.  Quotienting by the natural $\C^*$-scaling on holomorphic differentials gives a map $\Omega \M_g \to \mathbb{P}\Omega \M_g$.  Since $\M$ is invariant under the $\C^*$-action, the quotient $\mathbb{P}\M$ is a quasiprojective subvariety of $\mathbb{P}\Omega \M_g$.  Now $\pi$ factors through $\bar \pi : \mathbb{P}\Omega \M_g \to \M_g$, and $\bar\pi$ is a proper morphism of varieties.  In particular, $\bar \pi$ is a closed morphism of varieties, and hence $\pi(\M) = \bar \pi (\mathbb{P}\M)$ is a subvariety of $\M_g$.   [[Problem with last line; image of quasiprojective under closed, or even proper, map need not be quasiprojective; see Mathoverflow post]]
\end{proof}

In particular, Theorem \ref{thm:main} implies that the projection of a stratum of Abelian differentials, and indeed any affine invariant manifold, cannot be coarsely dense unless it is topologically dense.
% \jenya{changed cor to a remark}

% \begin{cor}
%   \label{cor:coarse-dense-top-dense}
%   Let $\H$ be a connected component of a stratum.  Then: 
% 
%   $\pi(\H)$ is coarsely dense in $\M_g$ with respect to the Teichm\"uller metric \\ $\Leftrightarrow$ $\pi(\H)$ is topologically dense.
% \end{cor}

\paragraph{The Thurston metric.}
\label{sec:ThurstonIntro}
% \jenya{Subsec vs para for heading?}
% \ben{Don't have strong opinion.  But if we stick with para, should make spacing consistent with Strata par above, and add period} 
% \jenya{\checkmark}
Theorem \ref{thm:main} is also true if we replace the Teichm\"uller metric by the Thurston metric, $d_{Th}$. This is an asymmetric metric on Teichm\"uller space, where the distance from $X$ to $Y$ is defined using the Lipschitz constant for the best Lipschitz map from $X$ to $Y$. Thurston showed that, in fact, 
\[
 d_{Th}(X,Y) = \log \sup_{\alpha} \frac{\ell_Y(\alpha)}{\ell_X(\alpha)}
\]
where the supremum is taken over the set of free homotopy classes of closed curves \cite{Thurston98}.  Here $\ell_X(\alpha)$ denotes the hyperbolic length of $\alpha$ on $X$.  For our purposes, we will take the above formula as the definition.
% \ben{In the proof, we use that Thurston metric controls length ratios for non-simple curves too, so we should probably just do all closed curves in the def}
% \jenya{\checkmark}

% Because $d_{Th}$ is asymmetric, we can define coarse density in two ways. The first way is to say that a set $V$ is $K$-coarsely dense if for all $Y \in \M_g$ there is an $X \in V$ so that 
% \begin{enumerate}
%  \item $d_{Th}(X,Y) \leq K$
% \end{enumerate}
% Or, we can say that $V$ is $K$-coarsely dense, if under the same hypotheses,
% \begin{enumerate}[resume]
%  \item $d_{Th}(Y,X) \leq K$
% \end{enumerate}

%\ben{I like some aspects of the way you've written the two definitions, but I think it still might be potentially unclear because the phrase ``under the same hypotheses'' is not quite what we mean.  I've written an alternative below}
%\jenya{I changed a few symbols to words, but o.w. I like your version better}

Because $d_{Th}$ is asymmetric, we can define coarse density with respect to the Thurston metric in two possible ways.  

\begin{defn}
\label{def:Thurston-coarsely-dense}
 For a subset $S\subset \M_g$ the two (generally inequivalent) conditions 
\begin{enumerate}
 \item for each $Y \in \M_g$, there is some $X\in S$ \text{such that} $d_{Th}(X,Y) \leq K$
 \item for each $Y \in \M_g$, there is some $X\in S$ \text{such that} $d_{Th}(Y,X) \leq K$
 \end{enumerate}
give two different definitions of \emph{$K$-coarsely dense}, with respect to the Thurston metric.  
\end{defn}
 
It turns out that our result holds for both definitions:
\begin{thm}
 \label{thm:Thurston}
 Let $V\subset \M_g$ be an algebraic subvariety.  Then $V$ is coarsely dense in $\M_g$ with respect to the Thurston metric (using either definition of coarse density)  iff $V=\M_g$.  
\end{thm}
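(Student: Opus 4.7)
The plan is to mirror the proof of Theorem \ref{thm:main}, substituting Thurston-metric estimates for Teichm\"uller-metric ones. That proof has two ingredients: (i) a coarsely dense subset of $\M_g$ must contain a sequence of surfaces on which a fixed pants decomposition $\alpha_1, \ldots, \alpha_{3g-3}$ is simultaneously pinched with hyperbolic lengths going to zero at wildly different logarithmic rates, and (ii) a proper algebraic subvariety $V$, being cut out locally near the boundary by analytic equations in plumbing coordinates, cannot support any sequence with wildly different pinching rates. Ingredient (ii) is intrinsic to $V$ and carries over verbatim; the task is to reprove (i) for either asymmetric notion of Thurston coarse density.

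The key input is a $d_{Th}$-comparison of short hyperbolic lengths: if $d_{Th}(X,Y) \leq K$ or $d_{Th}(Y,X) \leq K$, and $\ell_Y(\alpha_i) = \epsilon_i$ for a pants decomposition $\alpha_1, \ldots, \alpha_{3g-3}$, then the lengths $\eta_i := \ell_X(\alpha_i)$ satisfy
\[
  |\log \eta_i| \asymp_K |\log \epsilon_i|
\]
with implicit constants depending only on $K$. Applying the Thurston inequality to the curve $\alpha_i$ itself gives one side of this comparison directly. The other side comes from probing the geometry with a transverse curve $\beta_i$ with $i(\beta_i, \alpha_i) > 0$ and $i(\beta_i, \alpha_j) = 0$ for $j \neq i$, which exists for any pants curve by a standard surface-topology argument. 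The collar lemma yields $\ell_X(\beta_i) \gtrsim |\log \eta_i|$ and $\ell_Y(\beta_i) \asymp |\log \epsilon_i|$, and combining these with the Thurston inequality applied to $\beta_i$ yields the reverse comparison. In the direction where the resulting inequality produces only an upper bound on $\ell_X(\beta_i)$, one must additionally know that any short curve of $X$ intersected by $\beta_i$ does not contribute extra length; this is arranged by choosing the $Y_n$ so that only the $\alpha_i$ are short on $Y_n$, which together with the Thurston bound forces the same on $X_n$.

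Equipped with this comparison, (i) follows as in Theorem \ref{thm:main}: pick $Y_n \in \M_g$ with $\ell_{Y_n}(\alpha_i) = \epsilon_{i,n} \to 0$ and ratios $|\log \epsilon_{i,n}|/|\log \epsilon_{j,n}|$ diverging arbitrarily for $i \neq j$; by coarse density obtain $X_n \in V$ within $d_{Th}$-distance $K$ (in the appropriate direction); the length comparison forces the $X_n$ to inherit wildly different pinching rates; and ingredient (ii) of Theorem \ref{thm:main} gives a contradiction unless $V = \M_g$. The main obstacle I anticipate is the collar estimate when $X$ is allowed to have short curves outside $\alpha_1, \ldots, \alpha_{3g-3}$; as noted, the asymmetric Thurston bound together with a careful choice of $Y_n$ confines such curves and prevents them from spoiling the comparison, so the two cases of Definition \ref{def:Thurston-coarsely-dense} can be handled uniformly.
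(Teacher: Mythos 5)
Your outline correctly identifies the two ingredients and rightly says that the plumbing-coordinate argument carries over unchanged; the only thing to reprove is the length comparison. Your treatment of that comparison, however, has a genuine gap in the direction corresponding to the first definition in Definition~\ref{def:Thurston-coarsely-dense}, namely $d_{Th}(X,Y)\le K$.

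Write $\eta_i=\ell_X(\alpha_i)$, $\epsilon_i=\ell_Y(\alpha_i)$. In the direction $d_{Th}(X,Y)\le K$, the Thurston inequality only says $\ell_Y(\gamma)\le e^K\ell_X(\gamma)$ for every curve $\gamma$, i.e.\ it gives \emph{lower} bounds on lengths measured on $X$. Applied to your fixed transverse curve $\beta_i$ this yields $\ell_X(\beta_i)\gtrsim\ell_Y(\beta_i)\gtrsim|\log\epsilon_i|$, and the collar lemma on $X$ yields $\ell_X(\beta_i)\gtrsim|\log\eta_i|$. Both are lower bounds on the same quantity, so they cannot be chained to produce the estimate $|\log\eta_i|\gtrsim|\log\epsilon_i|$ that you actually need (this is the ``reverse comparison,'' which rules out $\alpha_i$ being too long on $X$). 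What is needed is an \emph{upper} bound $\ell_X(\beta_i)\lesssim|\log\eta_i|+O(1)$, and this is false for a fixed $\beta_i$: a surface $X$ obtained from $Y$ by twisting heavily about $\alpha_i$ has $\ell_X(\beta_i)$ far exceeding $|\log\eta_i|$. Your proposed remedy—arranging that the only short curves on $X$ are the $\alpha_j$—does not address this, because the failure is caused by twisting around $\alpha_i$, not by unexpected short curves. (In the other direction $d_{Th}(Y,X)\le K$, your argument is fine: there the Thurston inequality does give the upper bound $\ell_X(\beta_i)\le e^K\ell_Y(\beta_i)\lesssim|\log\epsilon_i|$, and the collar lemma closes the loop.)

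The paper avoids this by replacing the fixed curve $\beta_i$ with a curve $\gamma$ \emph{adapted to} $X$ that crosses $\alpha_i$ and satisfies $\ell_X(\gamma)\le C_1\log(1/\ell_X(\alpha_i))+C_2$: when $\alpha_i$ is not a cuff of a Bers pants decomposition of $X$, take $\gamma$ to be a Bers cuff it must intersect; when $\alpha_i$ is a cuff, take $\gamma$ to be a concatenation of two ortho-geodesics in the adjacent pairs of pants with an arc of $\alpha_i$, and bound its length with the right-angled pentagon formula. Because this $\gamma$ is constructed from the hyperbolic geometry of $X$ it automatically sits perpendicular to the collar, so there is no twist contribution and the required upper bound on $\ell_X(\gamma)$ holds. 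The rest of Lemma~\ref{lem:Thurston-metric-length-bounds} then runs as you intended: the collar lemma on $Y$ gives $\ell_Y(\gamma)\gtrsim|\log\epsilon_i|$, and the Thurston inequality chains these together. A close relative of your idea that does work is to minimize $\ell_X\bigl(T_{\alpha_i}^N\beta_i\bigr)$ over $N\in\Z$; but the minimizing $N$ depends on $X$, so one is inevitably led back to an $X$-adapted curve.
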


The proof of this theorem is almost exactly the same as the proof of Theorem \ref{thm:main}, except in the first step. This is explained in Section \ref{sec:ThurstonProof} below.

\paragraph{Acknowledgements.}  We would like to thank Martin M\"oller for suggesting a simplification of our original proof that yielded the more general result proved here. Some of this work was done while the first author was supported by the Fields Institute during the Thematic Program on Teichmüller Theory and its Connections to Geometry, Topology and Dynamics.  Portions of this work were done while the second author was at the thematic program on Geometric Group Theory at MSRI, and while holding a visiting position at the Max Planck Institute for Mathematics in Bonn, Germany.
%\ben{Jenya may want to add some?}
%\jenya{Credit postdoc support?}

\section{Plumbing coordinates}
\label{sec:plumbing-coordinates}
The Deligne-Mumford compactification $\barMg$ is obtained by adding to $\M_g$ all genus $g$ nodal Riemann surfaces that are stable, i.e. all components of the complement of the set nodes have negative Euler characteristic.  This space $\barMg$ is compact and has the structure of a complex orbifold, extending the natural complex orbifold structure of $\M_g$.  

The \emph{plumbing construction} gives a system of local analytic coordinates for $\barMg$.  We will only need to understand neighborhoods of a \emph{maximally degenerate} point $\bar X \in \barMg$ (i.e. every component of the complement of the nodes is a thrice-punctured sphere), so we will only describe the coordinates for such points.  Such a point $\bar X$ may have automorphisms which cause $\barMg$ to have an orbifold point at $\bar X$.  To deal with this issue, we assume that we have passed, locally, to a manifold $\mathcal{U}$ that finitely covers the neighborhood.

Coordinates $t_1,\ldots,t_{3g-3}$ on a sufficiently small $\mathcal{U}$ are obtained as follows.  Let $p_1,\ldots,p_{3g-3}\in \bar X$ be the nodes.  The complement $\bar X  -\{p_1,\ldots,p_{3g-3}\}$ is a union of connected Riemann surfaces with a pair of punctures $(a_i,b_i)$ corresponding to each node $p_i$.  Let $U_i,V_i$ be small coordinate neighborhoods of $a_i,b_i$, respectively, and let $u_i,v_i$ be local analytic coordinates at these points.  By rescaling, we can assume that $U_i\supset \{u_i: |u_i|\le 1\}$ and $V_i\supset \{v_i: |v_i|\le 1\}$.   For $(t_1,\ldots,t_{3g-3})$ in a small neighborhood $ \Omega \subset \mathbb{C}^{3g-3}$ of the origin, we define a Riemann surface $X_{t_1,\ldots,t_{3g-3}}$ as follows.  From $\bar X  -\{p_1,\ldots,p_{3g-3}\}$, we remove the sets $D_U=\{0 < |u_i| < |t_i|\}$ and $D_V=\{0 < |v_i| <|t_i|\}$.  We then glue $u_i \in U-D_U$ to $v_i\in V-D_V$ when
$$u_iv_i=t_i.$$
This defines a possibly nodal Riemann surface; if all $t_i\ne 0$, the surface is smooth.  If all $t_i$ are zero, we get $\bar X$.   Hence we get a \emph{plumbing map}
$$\Phi : \Omega \to \mathcal{U}.$$

\begin{figure}[h!]
 \centering 
 \includegraphics{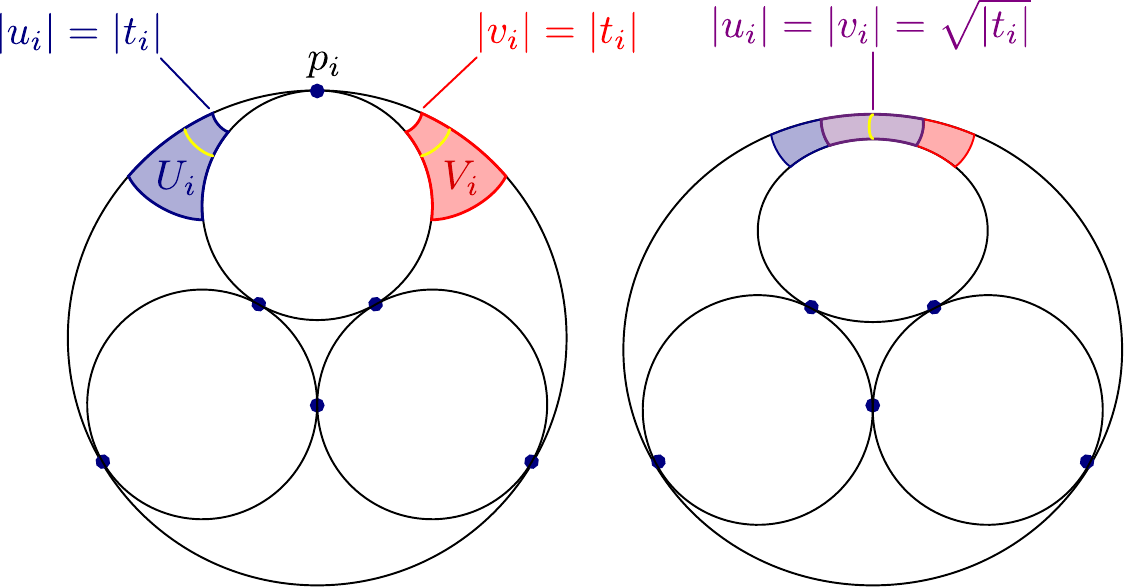}
 \caption{The plumbing construction at the node $p_i$}
 \label{fig:Plumbing}
\end{figure}

\begin{prop}[Plumbing coordinates]
    \label{prop:plumb}
  The plumbing map $\Phi$ is an analytic isomorphism onto an open neighborhood of some lift of $\bar X$ in $\mathcal{U}$.  Here the analytic structure on $\mathcal{U}$ comes from lifting the natural analytic structure on $\barMg$.
\end{prop}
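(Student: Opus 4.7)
The plan is to produce $\Phi$ as the moduli map of an explicit analytic family of stable curves over $\Omega$, then show that its differential at the origin is invertible so that the holomorphic inverse function theorem gives an analytic isomorphism onto a neighborhood.

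First I would construct the total space $\pi : \mathcal{X} \to \Omega$ whose fiber over $t = (t_1,\ldots,t_{3g-3})$ is the plumbed surface $X_t$. Starting from $(\bar X - \{p_1,\ldots,p_{3g-3}\}) \times \Omega$, one removes the sets $\{|u_i| < |t_i|\}$ and $\{|v_i| < |t_i|\}$ near each pair of punctures and identifies the remaining tubes by $u_i v_i = t_i$. Because this relation depends analytically on $(u_i, v_i, t_i)$, the total space $\mathcal{X}$ is an analytic space, $\pi$ is flat and analytic, and each fiber is a stable nodal Riemann surface, smooth precisely when every $t_i \ne 0$. Stability follows because each component of $\bar X \setminus \{p_i\}$ is a thrice-punctured sphere and partial plumbing only fills in punctures in pairs. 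By the universal property of $\mathcal{U}$ as a manifold cover of a neighborhood of $\bar X$ in $\barMg$ (automorphisms of $\bar X$ being trivialized on $\mathcal{U}$), this family induces a unique analytic classifying map $\Phi : \Omega \to \mathcal{U}$ sending $0$ to the specified lift of $\bar X$.

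Both $\Omega$ and $\mathcal{U}$ have complex dimension $3g-3$, so by the holomorphic inverse function theorem it suffices to check that $d\Phi_0$ is injective. I would do this via deformation theory of nodal curves: the tangent space $T_{\bar X}\mathcal{U}$ is $\Ext^1(\Omega_{\bar X}, \mathcal{O}_{\bar X})$, which fits in a short exact sequence
$$0 \to H^1(\bar X, T_{\bar X}) \to \Ext^1(\Omega_{\bar X}, \mathcal{O}_{\bar X}) \to \bigoplus_i T_{a_i}\bar X \otimes T_{b_i}\bar X \to 0,$$
where the kernel parametrizes node-preserving deformations and the quotient parametrizes node-smoothing deformations. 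Since $\bar X$ is maximally degenerate, every component is a rigid thrice-punctured sphere, so $H^1(\bar X, T_{\bar X}) = 0$ and the tangent space is spanned by the $3g-3$ smoothing classes $\partial/\partial u_i \otimes \partial/\partial v_i$. A Kodaira--Spencer computation on $\mathcal{X}$ identifies $d\Phi_0(\partial/\partial t_i)$ with the $i$-th smoothing class, so $d\Phi_0$ is an isomorphism.

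The main obstacle is this last identification: one must verify that the scalar plumbing parameter $t_i$, which enters only through the algebraic gluing $u_i v_i = t_i$, really represents the smoothing class $\partial/\partial u_i \otimes \partial/\partial v_i$ in a deformation-theoretic sense. Concretely this amounts to computing the Kodaira--Spencer cocycle of $\pi$ at the origin on a \v{C}ech cover of $\bar X$ adapted to the plumbing neighborhoods $U_i$ and $V_i$, and recognizing the resulting extension class as the tautological element of $T_{a_i}\bar X \otimes T_{b_i}\bar X$.
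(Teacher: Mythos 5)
Your argument is essentially correct, but it takes a completely different route from the paper: the paper does not prove Proposition \ref{prop:plumb} at all, instead citing Marden's research announcement and Kra, whose arguments live in the Teichm\"uller-theoretic world (Kleinian groups, quasiconformal deformation of the boundary point \`a la Bers--Abikoff). What you have written is the standard algebro-geometric/deformation-theoretic proof (as in Arbarello--Cornalba--Griffiths): build the plumbing family $\mathcal{X}\to\Omega$, use prorepresentability of the deformation functor of a stable curve (which holds because the sheaf of infinitesimal automorphisms has no global sections) to get the classifying map to the Kuranishi space $\mathcal{U}$, and check via Kodaira--Spencer that $d\Phi_0$ hits each smoothing direction in $\operatorname{Ext}^1(\Omega_{\bar X},\mathcal{O}_{\bar X})$; maximal degeneracy kills the $H^1$ of the sheaf of vector fields vanishing at the nodes, so the $3g-3$ smoothing classes span and the inverse function theorem finishes. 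The step you flag as the main obstacle --- identifying $d\Phi_0(\partial/\partial t_i)$ with the tautological class in $T_{a_i}\bar X\otimes T_{b_i}\bar X$ --- is indeed the crux, but it is a standard local \v{C}ech computation on the model family $uv=t$. The one caveat worth making explicit: your proof naturally establishes the statement for the analytic structure on $\barMg$ coming from the algebraic (Deligne--Mumford/Kuranishi) side, whereas the proposition as stated refers to the ``natural'' analytic structure built by Bers and Abikoff; to conclude you must invoke the Hubbard--Koch comparison identifying the two structures. Since the paper cites Hubbard--Koch for exactly this purpose anyway, this does not undermine your argument, but it should be stated rather than left implicit. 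The trade-off between the approaches: the Teichm\"uller-theoretic references prove the statement directly for the structure the paper uses, while your route is more self-contained for a reader comfortable with deformation theory and makes the appearance of analytic equations cutting out algebraic subvarieties in plumbing coordinates transparent.
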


The above result is stated in the Theorem and Corollary of Section 4 in the research announcement \cite{Marden87}.  Proofs of similar results appear in \cite{ Kra90}.  The compactified moduli space $\barMg$ was first studied from the perspective of Teichm\"uller theory by Bers \cite{Bers74} and Abikoff (\cite{Abikoff76} and \cite{Abikoff77}).  These authors constructed $\barMg$ as an analytic space, making heavy use of Kleinian groups.  The plumbing construction seems to have been known quite early; see \cite{Fay73}.  Plumbing parameters as coordinates for $\barMg$ were extensively studied by Earle-Marden \cite{em12} and Kra \cite{Kra90}.

The space $\barMg$ was constructed from an algebraic perspective by Deligne-Mumford \cite{dm69}.
% and proved to be a projective variety in the works Knudsen-Mumford (km76) and Knudsen (Knudsen83); Mumford-Gieseker also found a simpler proof
The fact that the analytic structure inherited from the algebraic structure agrees with the analytic structure mentioned in the previous paragraph is due to Hubbard-Koch \cite{hk14}.   Hence algebraic subvarieties are cut out near boundary points of the type discussed above by analytic equations in the plumbing coordinates; we will use this fact in the proof of Theorem \ref{thm:main}.  

\paragraph{Plumbing parameters and hyperbolic lengths.}
We will need to understand the relation between plumbing parameters near a boundary point and the hyperbolic lengths of curves that get pinched at that boundary point.  Conveniently, this is understood precisely (we could get away with much coarser estimates - see Remark \ref{rmk:length}).  

\begin{prop}[\cite{Wolpert10}, p.71]
\label{prop:hyplength}
  For $\bar X$ a maximally degenerate boundary point, let $\alpha_i$ be the pinched curve corresponding to parameter $t_i$.  Then for $X=X_{t_1,\ldots,t_{3g-3}} = \Phi(t_1,\ldots,t_{3g-3})$, the hyperbolic length of $\alpha_i$ satisfies
  \[
  \ell_X(\alpha_i) = \frac{2\pi^2}{\log1/|t_i|} \left( 1 + O\left(\frac{1}{\left( \log 1/|t_i| \right) ^2}\right) \right). 
\]
\end{prop}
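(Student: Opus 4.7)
The plan is to relate $\ell_X(\alpha_i)$ to the conformal modulus of the plumbing annulus $A_i$, using the exact identity $\ell=\pi/\operatorname{Mod}$ for an isolated hyperbolic cylinder to get the leading term and a model-metric comparison to extract the $O(1/(\log 1/|t_i|)^2)$ error.

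First, I would identify $A_i \subset X$ explicitly. In the $u_i$ chart the plumbing construction leaves the round annulus $\{|t_i|\le |u_i|\le 1\}$, glued along its two boundary circles (via $u_i v_i = t_i$) to the corresponding region in the $v_i$ chart. This is an embedded annulus whose core is freely homotopic to $\alpha_i$, and in the $u_i$ coordinate it has conformal modulus $\operatorname{Mod}(A_i) = \tfrac{1}{2\pi}\log(1/|t_i|)$. Applying the Schwarz--Ahlfors lemma to the universal cover of $X$ gives the one-sided inequality $\operatorname{Mod}(A_i) \le \pi/\ell_X(\alpha_i)$, which already yields the leading-order upper bound $\ell_X(\alpha_i) \le 2\pi^2/\log(1/|t_i|)$.

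For the matching lower bound and the refined error, I would build a model metric $\rho_{\mathrm{mod}}$ on $X$ as follows: put the complete finite-area hyperbolic metric on each component $Y_j$ of $\bar X - \{p_1,\ldots,p_{3g-3}\}$, excise a horocyclic cusp neighborhood at each puncture, and, for each node $p_i$, glue in a finite hyperbolic cylinder of core length $\ell_i^{\mathrm{mod}} := 2\pi^2/\log(1/|t_i|)$ bridging the two cusps. Choosing the horocycle lengths so that the two sides match to leading order produces a continuous $\rho_{\mathrm{mod}}$ whose curvature is identically $-1$ away from the transition seams and differs from $-1$ on each seam by $O((\ell_i^{\mathrm{mod}})^2)$.

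Finally, writing the true hyperbolic metric on $X$ as $e^{2u}\rho_{\mathrm{mod}}$ and analyzing the prescribed-curvature equation $\Delta_{\rho_{\mathrm{mod}}} u = e^{2u} - 1 - (K_{\mathrm{mod}}+1)$ via a maximum-principle argument produces $\|u\|_\infty = O(1/(\log 1/|t_i|)^2)$. Combined with the exact length-modulus identity on the model cylinder, this gives the claimed asymptotic. The main obstacle is controlling $u$ when all $3g-3$ curves simultaneously pinch at wildly different rates; the resolution, following Wolpert, is that the thin plumbing annuli are mutually well separated in the conformal geometry, so the error contributions from different seams decouple to the required order and the local estimate near any single $\alpha_i$ is not spoiled by the others.
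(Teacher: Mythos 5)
The paper does not prove this proposition; it is quoted from Wolpert \cite{Wolpert10}, so there is no in-paper argument to compare against. For the record, your outline is essentially Wolpert's own ``expansion'' argument: identify the round plumbing annulus and use monotonicity of modulus (your Schwarz--Ahlfors step) for the leading term, then graft a model metric with small curvature defect near the seams and solve the prescribed-curvature equation by a maximum-principle estimate to control the conformal factor. One point worth flagging: the modulus step alone, even with a matching lower bound for the maximal annulus, only yields a relative error of size $O(1/\log(1/|t_i|))$, not the claimed $O(1/(\log 1/|t_i|)^2)$, so the model-metric step you describe is genuinely necessary rather than a refinement. The two unproved assertions in your sketch --- that the curvature of the glued model deviates from $-1$ on the seams by $O((\ell_i^{\mathrm{mod}})^2)$, and that the error contributions from the $3g-3$ simultaneously pinching annuli decouple --- are precisely the technical inputs that must be imported from Wolpert; as a sketch of the cited proof, the proposal is on track.
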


\begin{rmk}
  \label{rmk:length}
  In the proof of Theorem \ref{thm:main}, we will not need the full precision of the above proposition, though it is convenient to have.
  To understand the rough order of magnitude, it is sufficient to consider extremal length, since this is comparable to hyperbolic length for short curves.  
  Estimating extremal length in terms of plumbing coordinates is very natural.
  In fact, the highest modulus annulus around a given pinching curve will have an outer boundary that is (approximately) fixed and an inner boundary of radius approximately the plumbing parameter $t$.
  Thus the extremal length is, coarsely, $\frac{2\pi}{\log 1/|t|}$.  

%We now sketch how we could get by using much coarser estimates.  We replace hyperbolic lengths by extremal lengths.  As with hyperbolic length ratios, extremal length ratios are controlled by Teichm\"uller distance (by Kerckhoff's formula, there is a very precise relationship).

\end{rmk}

\section{Proof of main theorem}
\label{sec:proof-main-theorem}

\subsection{Idea of proof}
The proof consists of the following three steps.
\begin{enumerate}
 \item Given a pants decomposition $\{\alpha_1, \dots, \alpha_{3g-3}\}$, we use coarse density of $V$ to find a sequence $\{X_m\} \subset V$ where 
 \[
  \frac{c_1}{m^i} \leq \ell_{X_m}(\alpha_i) \leq \frac{c_2}{m^i}
 \]
 for constants $c_1$ and $c_2$ that depend only on $K$.
%\jenya{Use of $\asymp$}
 
 \item The sequence $X_m$ converges to a maximally degenerate point $\bar X \in \bar{M_g}$. Let $t_1, \dots, t_{3g-3}$ be plumbing coordinates for a small neighborhood of $\bar X$. Then by the relationship between hyperbolic lengths of short curves and plumbing coordinates given by Proposition \ref{prop:hyplength}, we get that for any $p \in \mathbb N$ and for all $j > i$,
   \begin{align}
     \label{eq:dominate}
      |t_j(m)| = o(\left| t_i(m)^p \right|)
   \end{align}
 
 where $t_i(m)$ is the $i^{th}$ plumbing coordinate for $X_m$. That is, the $j^{th}$ plumbing coordinate is dominated even by high powers of the previous plumbing coordinates.
 
 \item The fact that plumbing coordinates are analytic and $V$ is not all of $\M_g$ means that they satisfy at least one analytic equation $ f(t_1, \dots, t_{3g-3}) = 0$. To see why this might give a contradiction, consider a specific example where the plumbing coordinates satisfy the polynomial equation 
 \[
 (t_1)^{p_1} + \dots + (t_{3g-3})^{p_{3g-3}} = 0
 \] 
 for some positive exponents $p_1, \dots, p_{3g-3}$. Using (\ref{eq:dominate}), we get that the leading term $t_1^{p_1}$ dominates the others for large $m$, and thus the left hand side cannot possibly equal zero.  
 
 In general, because $f$ is analytic, its Taylor series satisfies strong convergence conditions. Thus, we identify the leading term of the Taylor series, and show that for all $m$ large enough, this leading term dominates the (infinite) sum of the rest of the terms.
 
\end{enumerate}

\subsection{Proof}

\begin{proof}[Proof of Theorem \ref{thm:main}]
  One direction is trivial.

  For the other direction, suppose for the sake of contradiction that $V \ne \M_g$ and that $V$ is $K$-coarsely dense in $\M_g$.
We will use this assumption to construct a sequence of hyperbolic surfaces $X_m \in V$, $m \to \infty$, given by fixing a pants decomposition, and taking the cuffs $\alpha_i$ to have very widely separated lengths, all going to $0$, and show that this contradicts the dimension assumption.  
  
  First, consider the preimage $\tilde V$ of $V$ in Teichmuller space, $\mathcal T_g$. Fix a pants decomposition $\{\alpha_1, \dots, \alpha_{3g-3}\}$ of $S$. Take a sequence of surfaces $Y_m \in \mathcal T_g$ as follows. If $\ell_{Y_m}(\alpha_i)$ denotes the length of $\alpha_i$ with respect to a point $Y_m$, let
  \[
   \ell_{Y_m}(\alpha_i) = \frac{1}{m^i}
  \]
  We specify $Y_m$ by choosing arbitrary twists about each $\alpha_i$.
  % Let $E_i = E_i(m) = \Ext_{Y_m}(\alpha_i)$ be the extremal length of $\alpha_i$ in $Y_m$. By Maskit \cite[Corollary 2]{Maskit85}, $\lim_{m \to \infty} \ell_{Y_m}(\alpha_i)/E_i = \pi$. In particular, for $m$ large enough, the extremal lengths of these curves are all within some fixed factor of the hyperbolic lengths.  Since we assume that $p(\mathcal H)$ is $K$-coarsely dense for some $K > 0$, we can find some sequence $X_m\in \mathcal{H}$ with $d_{Teich}(p(X_m), Y_m) \le K$. So, by Kerckhoff's formula \cite{Kerckhoff80}, the extremal lengths of corresponding cuff curves on $X_m$ must be within $e^{2K}$ of the extremal lengths on $Y_m$. In partcular, if $\ell_{X_m}(\alpha_i) = \ell_{X_m}(\alpha_i)(m) = \ell_{X_m}(\alpha_i)$ is the hyperbolic length of $\alpha_i$ with respect to $X_m$, then applying Maskit another time we see that
  
  Since we assume that $V$ is $K$-coarsely dense in $\mathcal M_g$ for some $K > 0$, its lift $\tilde V$ is also $K$-coarsely dense in $\mathcal T_g$. So, we can find some sequence $X_m\in \tilde V$ with $d_{Teich}(X_m, Y_m) \le K$.  A lemma of Wolpert (see \cite[Lemma 12.5]{fm12}) gives that
  \begin{align}
    \label{eq:wolpert}
    \frac 1c \ell_{Y_m}(\alpha_i) \leq \ell_{X_m}(\alpha_i) \leq c \cdot \ell_{Y_m}(\alpha_i)
  \end{align}
  for a constant $c$ depending only on $K$.  So
  \[
   \frac 1c \frac{1}{m^i} \leq \ell_{X_m}(\alpha_i) \leq c \frac{1}{m^i}.
 \]
 for all $m$.
  
  Project the sequence $X_m$ down to $V$. Abusing notation, we will still refer to it as $X_m$. Then in the Deligne-Mumford compactification $\overline{\mathcal{M}}_g$, $\{X_m\}$ converges to the boundary point $\bar X$ where all of the curves $\alpha_i$ are pinched to nodes.
  
  Now by Proposition \ref{prop:plumb}, we can take analytic plumbing coordinates $\mathbf{t} = (t_1, \dots, t_{3g-3}) \in \C^{3g-3}$ for $\mathcal{U}$ a manifold that finitely covers a small neighborhood of $\bar X$ in $\overline{\mathcal M}_g$.  Let $\mathbf{t} (m)= (t_1(m), \dots, t_{3g-3}(m))$ be the plumbing coordinates for (an appropriately chosen lift) of  $X_m$. By Proposition \ref{prop:hyplength}, we have
  \[
   \frac 1c \frac{1}{m^i} \leq \frac{2\pi^2}{\log1/|t_i(m)|} \left( 1 + O\left(\frac{1}{\left( \log 1/|t_i(m)| \right) ^2}\right) \right) \leq c \frac{1}{m^i} 
  \]
  In particular, for $m$ large enough, there is another constant $c'$ so that 
  
  %$$ \frac 1{c'} m^i \leq \log1/|t_i(m)| \leq c' m^i$$
   
  \[
   e^{-c'm^i} \leq |t_i(m)| \leq e^{-\frac 1{c'} m^i}.
  \]
  In particular, for all $j > i$ and for any positive integer $p$,
  \begin{align}
    \label{eq:bound}
    |t_j(m)| = o(\left| t_i(m)^p \right|) 
  \end{align}
  as $m \to \infty$.  
  
  These coordinates are analytic, and we are assuming that $V$ is locally cut out by algebraic equations in $\overline{\mathcal M}_g$. This means that in this coordinate chart, (the local lift to the manifold cover $\mathcal{U}$) of $V$ is given as the solution to analytic equations in the plumbing coordinates. Since we are assuming that $V\ne \M_g$, there is at least one such equation:
  \[
   f(t_1, \dots, t_n) = 0,
  \]
  where $f$ is a complex analytic function (not identically equal to zero) defined on a connected neighborhood of the origin.   Here $n=3g-3$.

  Now consider the Taylor series 
  \[
  f(\mathbf{t}) = \sum_\alpha c_\alpha \mathbf{t}^\alpha,
  \]
  where the sum is over all multi-indices $\alpha=(\alpha_1,\ldots,\alpha_n)$ with non-negative integral coefficients (and $\mathbf{t}^\alpha$ means $t_1^{\alpha_1}\cdots t_n^{\alpha_n}$).  Since $f$ is analytic, the sum converges absolutely for small $\mathbf{t}$.

  We wish to identify the term in the sum that is dominant along our sequence $X_m$.  To this end, consider a lexicographic total ordering $\succ$ defined by setting
  %\jenya{This was a bit confusing. Can we switch it so $\mathbf{a} \succ \mathbf{b}$ if $\alpha_k < \beta_k$? Then it's a well-ordering, instead of a reverse well-ordering. Ben: Done} 
  \[
  (\alpha_1,\ldots,\alpha_n) \succ (\beta_1,\ldots,\beta_n)
  \]
  iff for the largest $k$ such that $\alpha_k\ne \beta_k$, we have $\alpha_k > \beta_k$ (eg $(1,8,5,2) \succ (2,7,5,2)$ since $7<8$).  Note that any subset of these indices has a smallest element with respect to $\succ$ (i.e. it is a well-ordering).   The  monomials in the Taylor series with smaller $\alpha$ will correspond to \emph{larger} terms for our sequence $X_m$.
  
  Now in the Taylor series, consider the $\alpha$ such that $c_\alpha \ne 0$.   Among these $\alpha$ there is a smallest element $\beta$, and so we can write
  \begin{align}
    f(\mathbf{t}) = c_\beta \mathbf{t}^\beta + \sum_{\alpha \succ \beta} c_\alpha \mathbf{t}^\alpha \label{eq:series}
  \end{align}
  where $c_\beta\ne 0$.

  The strategy is to show that the $\mathbf{t}^\beta$ term dominates the remaining terms along our sequence $X_m$.   By the Cauchy bound (see eg \cite[Proposition 2.1.3]{taylor02}),
  % \jenya{Is $f$ holomorphic? Ben: yes} 
  there is some $M$ and $r>0$, such that for any $\alpha$
  \begin{align}
      |c_\alpha| \le \frac{M}{r^{|\alpha|}} \label{eq:cauchy}
  \end{align}
  where $|(\alpha_1,\ldots,\alpha_n)| :=\alpha_1 + \cdots + \alpha_n$.

  Our goal now is to show that $\left| \sum_{\alpha \succ \beta} c_\alpha \mathbf{t}(m)^\alpha\right| = o\left(\left| \mathbf{t}(m)^\beta \right|\right)$ as $m\to\infty$.  We write
  $$\sum_{\alpha \succ \beta} c_\alpha \mathbf{t}^\alpha = S_n(\mathbf{t}) + \cdots + S_1(\mathbf{t}),$$
  where
  \begin{align*}
    S_i(\mathbf{t}) :&= \sum_{\substack{\alpha_n =\beta_n \\ \vdots \\ \alpha_{i+1}=\beta_{i+1} \\ \alpha_i>\beta_i}}  c_\alpha  \mathbf{t}^\alpha 
  \end{align*}
  (Here the sums are taken over all multi-indices $\alpha$ of non-negative integers satisfying the given constraints).  We then get
  \begin{align*}
    |S_i(\mathbf{t})| &\le  \sum_{\substack{\alpha_n \ge\beta_n \\ \vdots \\ \alpha_{i+1}\ge \beta_{i+1} \\ \alpha_i
    > \beta_i}} \left| c_\alpha \mathbf{t}^\alpha \right| \\
             &\le \sum_\alpha\left| c_{\alpha+(0,\dots,0,\beta_i +1, \beta_{i+1},\dots, \beta_n)} \mathbf{t}^{\alpha+(0,\dots,0,\beta_i +1, \beta_{i+1},\dots, \beta_n)} \right| \\
           &= \sum_\alpha \left| c_{\alpha+(0,\dots,0,\beta_i +1, \beta_{i+1},\dots, \beta_n)}\frac{\mathbf{t}^\alpha \mathbf{t}^\beta t_i}{t_1^{\beta_1} \cdots t_{i-1}^{\beta_{i-1} }} \right| \\
    &=\left| \frac{ t_i}{t_1^{\beta_1} \cdots t_{i-1}^{\beta_{i-1} }} \mathbf{t}^\beta \right| \sum_{\alpha} \left| c_{\alpha+(0,\dots,0,\beta_i +1, \beta_{i+1},\dots, \beta_n)}\mathbf{t}^\alpha\right|
  \end{align*}

  Now using the Cauchy bound (\ref{eq:cauchy}), we see that the sum in the last expression above converges to an analytic function for small $\mathbf{t}$ (we also use that the number of multi-indices $\alpha$ with $|\alpha|=k$ grows at most polynomially with $k$).   In particular, the sum is bounded near the origin.
  By (\ref{eq:bound}),
  $$\left|\frac{ t_i(m)}{t_1(m)^{\beta_1} \cdots t_{i-1}(m)^{\beta_{i-1} }} \right|=o(1),$$
  as $m\to\infty$.  Using this and the boundedness of the sum part, we get that $|S_i(\mathbf{t}(m))| = o\left( \left| \mathbf{t}(m)^\beta\right| \right) $ as $m\to\infty$.  Summing, we get
  $$\left|\sum_{\alpha \succ \beta} c_\alpha \mathbf{t}(m)^\alpha \right| \le |S_n(\mathbf{t}(m))| + \cdots + |S_1(\mathbf{t}(m))|=o\left( \left| \mathbf{t}(m)^\beta\right| \right).$$
  Since the points $X_m$ are on the variety $V$, we have $f(\mathbf{t}(m))=0$ for all $m$, so using the expansion (\ref{eq:series}) and rearranging the equation gives
  $$\left|c_\beta \mathbf{t}(m)^\beta \right| = \left|-\sum_{\alpha \succ \beta} c_\alpha \mathbf{t}(m)^\alpha  \right| = o\left( \left| \mathbf{t}(m)^\beta \right|\right),$$
  which gives a contradiction as $m\to\infty$, since $\mathbf{t}(m)^\beta \ne 0$, and by assumption $c_\beta \ne 0$.  
  
 %  We take the Taylor series expansion of $f$ at 0. Suppose that $t_{i_1}$ is the coordinate with the least index that appears with a non-zero coefficient in the highest order term, and $t_{i_2}$ is the coordinate with the least index that appears in a non-zero term without $t_{i_1}$. In particular, $i_1 < i_2 < i_j$ for all $j \neq 1, 2$ and we can rewrite this as 
 %  \jenya{This part is sketchy...}
 %  \[
 %   c_1 t_{i_1}p(t_{i_1}, \dots, t_n) + c_2 t_{i_2} q(t_{i_2}, \dots, t_n) + R' + R = 0
 %  \]
 %  where $p$ and $q$ are polynomials, $R'$ is a term that does not contain $t_{i_1}$ or $t_{i_2}$, and $R$ is the remainder that contains only terms of higher order. Then we claim that there is a constant $d$ so that 
 %  \[
 %   \frac 1d t_{i_2} q(t_{i_2}, \dots, t_n) \leq t_{i_1}p(t_{i_1}, \dots, t_n) \leq d t_{i_2} q(t_{i_2}, \dots, t_n)
 %  \]
 %  So in particular,
 %  \[
 %   \frac 1d \leq \frac{t_{i_1}p(t_{i_1}, \dots, t_n)}{t_{i_2} q(t_{i_2}, \dots, t_n)} \leq d
 %  \]
 
 % But we have shown that $t_i(m) = o(t_j(m))$ for all $j > i$. In particular, if we put the plumbing coordinates $(t_1(m), \dots, t_n(m))$ for $X_m$ into this inequality, the term in the middle will go to infinity. This gives us a contradiction.
  
\end{proof}

\section{Proof for the Thurston metric}
\label{sec:ThurstonProof}

We now explain the proof of the main theorem for the Thurston metric (Theorem \ref{thm:Thurston}).
%\ben{Instead of recapping def, add ref to def in intro.  Also, might be better to move this into the proof environment that I suggest below} 
% 
%   The result is also true if we replace the Teichm\"uller metric by the Thurston metric $d_{Th}(X,Y)$, which measures the Lipschitz constant of the best Lipschitz map between $X$ and $Y$, measured with respect to the hyperbolic metrics.  Because the Thurston metric is asymmetric, there is ambiguity in what coarse density of a subset $S\subset \M_g$ should mean.  Our result is true for either of the two possible definitions.
% 
%   We first consider the case where we define $S$ to be coarsely dense if the set 
%   \begin{align}
%     \label{eq:coarse}
%     \{Y \in \M_g : \exists X\in S \text{ such that } d_{Th}(X,Y) \le K \}
%   \end{align}
%   is equal to $\M_g$ for some finite $K$.
%We will describe the modifications needed to prove that a subvariety that is coarsely dense in the above sense must be all of $\M_g$.
We can no longer use Wolpert's lemma (\ref{eq:wolpert}) to relate the lengths of curves on pairs of surface that are a bounded distance apart. Instead, the proof hinges on the following lemma:  
  \begin{lemma}
  \label{lem:Thurston-metric-length-bounds}
   Suppose $d_{Th}(X,Y) \leq K$. There is an $\epsilon$ depending only on the genus $g$ so that if $\alpha$ is a simple closed curve with $\ell_Y(\alpha) \leq \epsilon$, then 
   \begin{align}
   \label{eq:claim}
    \frac{1}{c} \cdot  \ell_Y(\alpha) \leq \ell_X(\alpha) \leq c \cdot \ell_Y(\alpha)^{1/c}
   \end{align}
   for a constant $c$ depending only on $K$ and the genus $g$.
%   \ben{This is a little different than what I had stated, which was $C_4 \cdot \ell_Y(\alpha)^{C_3}$ for the upper bound.  
%    My constants depend on $K$.  I think having $e^{-K}$ in the exponent is kind of ugly, even if it is more precise.  So I think I'd prefer my way.  We also need to write in a way that it's clear what the constants depend on. }
%    \jenya{Agreed. We can make it one constant, since making $c$ smaller makes the bounds looser. Don't need the constant out front, I think.}
 %  \ben{It's probably right now, but I wasn't careful thinking about the dependence on K, so we should make $\epsilon$ depend on $K$ too if we want just one const in the ineq} 
 %  \jenya{You're right, need cst in u.b. See comp below}
\end{lemma}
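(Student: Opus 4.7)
The left inequality is immediate from the definition $d_{Th}(X,Y)=\log\sup_{\gamma}\ell_Y(\gamma)/\ell_X(\gamma)$: the hypothesis $d_{Th}(X,Y)\le K$ gives $\ell_X(\alpha)\ge e^{-K}\ell_Y(\alpha)$, which is the left inequality for any $c\ge e^K$. The content is the right inequality, which says that a curve short on $Y$ must also be short on $X$ at a quantitative rate. My plan is to play off two estimates for $\ell_X(\beta)$, where $\beta$ is a closed curve transverse to $\alpha$: a universal lower bound that comes from the collar lemma on $Y$ pushed through the Thurston hypothesis, and a hands-on upper bound coming from one explicit choice of $\beta$ on $X$.

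For the lower bound, I would choose $\epsilon$ below the Margulis constant for genus $g$, so that whenever $\ell_Y(\alpha)\le\epsilon$, the standard collar about $\alpha$ on $Y$ has embedded half-width
\[
  w_Y=\operatorname{arcsinh}(1/\sinh(\ell_Y(\alpha)/2))\ge\log(1/\ell_Y(\alpha))-O(1).
\]
Any closed geodesic $\beta$ with $i(\alpha,\beta)\ge 1$ crosses this collar, so $\ell_Y(\beta)\ge 2w_Y$, and the Thurston hypothesis then forces $\ell_X(\beta)\ge e^{-K}\ell_Y(\beta)\ge 2e^{-K}\log(1/\ell_Y(\alpha))-C(K)$ for \emph{every} such $\beta$. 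This is a bound on the $X$-length of every curve meeting $\alpha$, but says nothing direct about $\ell_X(\alpha)$ itself.

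To convert this into a bound on $\ell_X(\alpha)$, I would exhibit one specific $\beta_0$ whose $X$-length is controlled by $\ell_X(\alpha)$ alone, with
\[
  \ell_X(\beta_0)\le 2\log(1/\ell_X(\alpha))+D(g)
\]
for some $D(g)$ depending only on the genus. This is standard from the existence of Bers pants decompositions: take one such decomposition of $X$, with all cuffs of length $\le B(g)$. If $\alpha$ is not a cuff, it meets some cuff $\gamma$ of the decomposition and we can take $\beta_0=\gamma$, with $\ell_X(\beta_0)\le B(g)$. If $\alpha$ is a cuff, we work inside a pair of pants adjacent to $\alpha$ whose other two cuffs have length $\le B(g)$; hyperbolic trigonometry in such a pants gives that the orthogeodesic from $\alpha$ to itself has length $2\log(1/\ell_X(\alpha))+O(1)$, and closing it up with a piece of $\alpha$ yields a $\beta_0$ intersecting $\alpha$. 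Feeding this upper bound into the universal lower bound along $\beta_0$ gives
\[
  2\log(1/\ell_X(\alpha))+D(g)\ge 2e^{-K}\log(1/\ell_Y(\alpha))-C(K),
\]
which rearranges to $\ell_X(\alpha)\le c\,\ell_Y(\alpha)^{e^{-K}}$ for a single $c=c(K,g)$ also large enough to absorb $e^K$ and cover the left inequality. The main obstacle I anticipate is the construction of $\beta_0$: verifying the $2\log(1/\ell_X(\alpha))+O(1)$ asymptotic for the orthogeodesic inside a Bers pair of pants, and checking that $i(\alpha,\beta_0)\ge 1$, is routine hyperbolic surface geometry but is the step where care is required.
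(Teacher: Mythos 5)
Your approach is essentially the same as the paper's: the left inequality follows directly from the definition of $d_{Th}$, and the right inequality comes from building a transversal to $\alpha$ on $X$ out of a Bers pants decomposition, then comparing its $X$-length (controlled via the collar about $\alpha$ on $X$) with its $Y$-length (bounded below via the collar about $\alpha$ on $Y$) through the Thurston inequality. However, the step you flagged as ``needing care'' is precisely where the argument as written breaks down. If $\alpha$ is a cuff and $\delta$ is a simple orthogeodesic from $\alpha$ to itself inside a \emph{single} adjacent pair of pants $P_1$, then $\delta$ separates $P_1$ into two annular pieces, one containing each of the other two cuffs. Concatenating $\delta$ with either of the two arcs of $\alpha$ cut out by $\partial\delta$ produces a simple closed curve contained in $\overline{P_1}$, and every essential simple closed curve in a pair of pants is boundary-parallel; so the resulting $\beta_0$ is homotopic to one of the other cuffs of $P_1$ and satisfies $i(\alpha,\beta_0)=0$. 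The collar-lemma lower bound on $\ell_Y(\beta_0)$ that you need therefore does not apply. The paper avoids this by taking orthogeodesics in \emph{both} pairs of pants $P_1,P_2$ on either side of $\alpha$ and concatenating them with arcs of $\alpha$, which yields a (not necessarily simple) closed curve that genuinely crosses $\alpha$ and still has length $O(\log(1/\ell_X(\alpha)))$ by the right-angled pentagon formula. With that one correction to the construction of $\beta_0$, the rest of your computation is correct and matches the paper's proof.
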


We will first explain the proof of Theorem \ref{thm:Thurston} given this lemma, and then prove the lemma.

  In contrast to the case of the Teichmuller metric, the ratio $\ell_X(\alpha)/ \ell_Y(\alpha)$ is not bounded in terms of $d_{Th}(X,Y)$. In fact, for any simple closed curve $\alpha$, we can find pairs $X,Y$ where $d_{Th}(X,Y) \leq K$ and $\ell_X(\alpha)/\ell_Y(\alpha)$ is as large as we like.  But the above lemma is enough for our purposes.  
%  \ben{ I think we could shorten this par}

%  \ben{The next part should be in a proof environment} 
  \begin{proof}[Proof of Theorem \ref{thm:Thurston} given Lemma \ref{lem:Thurston-metric-length-bounds}]
    We will assume that $V$ satisfies the first definition of coarse density from Definition \ref{def:Thurston-coarsely-dense}; Remark \ref{rmk:def} indicates the changes needed if we were to use the second definition instead.
    
    As in the proof of Theorem \ref{thm:main}, we first construct a sequence $Y_m$ of Riemann surfaces converging in $\barMg$ to a maximally degenerate point $\bar X$.
    This time, if $\alpha_1, \dots, \alpha_{3g-3}$ is the shrinking pants decomposition, we need to choose the lengths to be even more widely separated than before, since Lemma \ref{lem:Thurston-metric-length-bounds} gives us less control than Wolpert's lemma (\ref{eq:wolpert}).  Specifically, let 
  \[
   \ell_{Y_m}(\alpha_i) = e^{-m^i}
  \]
  Using our definition of coarse density, for each $m$, there is some $X_m \in V$ with $d_{Th}(X_m,Y_m) \leq K$. So by Lemma \ref{lem:Thurston-metric-length-bounds}, this implies 
  \begin{align}
    \label{eq:len-bds}
      \frac 1c \cdot e^{-m^i} \leq \ell_{X_m}(\alpha_i) \leq c \cdot e^{-m^i/c}
  \end{align}
  for a constant $c$ depending only on $K$ and the genus $g$.

  Again, we take plumbing coordinates $t_1, \ldots, t_{3g-3}$ in a small neighborhood of $\bar X$. Using Proposition \ref{prop:hyplength} in the same way as in the proof of Theorem \ref{thm:main}, we see that
  \[
 e^{\frac{-c'}{ \ell_{X_m}(\alpha_i)}} \leq |t_i(m)| \leq e^{\frac{-1}{c'\cdot \ell_{X_m}(\alpha_i)}}
  \]
  where $c'$ is a constant that depends only on $g$ and $K$.
  Applying the estimate (\ref{eq:len-bds}) to this gives
  \begin{align}
    \label{eq:4}
    e^{-cc'e^{m^i}} \le |t_i(m)| \le e^{-(1/cc')e^{m^i/c}}. 
  \end{align}
  % \[
  % e^{-c_1 e^{c_2 m^i}} \leq |t_i(m)| \leq  e^{-c_1 e^{m^i}}
  % \]
%  \jenya{need 2 csts?}
  This guarantees that $|t_j(m)| = o(|t_i(m)|^p)$. From here the proof of Theorem \ref{thm:Thurston} proceeds exactly as the proof of Theorem \ref{thm:main} starting from (\ref{eq:bound}).   
\end{proof}

\begin{rmk}
  \label{rmk:def}
    If we were to use the second definition of coarse density, the inequalities in (\ref{eq:len-bds}) would be replaced by
    $(1/c)^c e^{-cm^i} \leq l_{X_m}(\alpha_i) \leq ce^{-m^i}$
    but the structure of the rest of the proof would be the same.
    %   \[
% \begin{matrix}
%    \frac 1c \cdot  \ell_X(\alpha) & \leq \ell_Y(\alpha) & \leq c\ell_X(\alpha)^{\frac 1c}\\
%    \left (\frac 1c \ell_Y(\alpha) \right)^c & \leq \ell_X(\alpha) & \leq c \ell_Y(\alpha)
%   \end{matrix}
% \]
  \end{rmk}
% \jenya{Can't remember pref symb for coarse ineq}
%\ben{I don't think we should use the coarse notation if it's just for this one spot - also it conflicts with the ordering on the monomials} 
%\jenya{\checkmark}
%\ben{We're still using $\asymp$ without saying what implicit constants depend on}
%   We do not have an exact analog of the lemma of Wolpert used in (\ref{eq:wolpert}) to control the ratio of lengths on $X_m$ and $Y_m$ using the assumption that the Teichm\"uller distance between the two is bounded.  However, there is a weaker analog that will suffice.  It follows immediately from the definition of the Thurston metric that for $X,Y$ with $d_{Th}(X,Y) \le K$ and any curve $\alpha$,
%   \begin{align}
%     \label{eq:easy}
%     \ell_X(\alpha) \ge \frac{1}{e^K} \cdot \ell_Y(\alpha).
%   \end{align}
% 
% For the other direction, we claim that for some constants $C_3,C_4>0$,
%   \begin{align}
%   \label{eq:claim}
%    \ell_X (\alpha) \le C_4 \cdot \ell_Y(\alpha)^{C_3},
%   \end{align}
%   whenever $\ell_Y(\alpha)$ is sufficiently small.  

% For the other direction, we claim that there is a positive increasing function $f$ with $\lim_{x\to 0}f(x)=0$  such that for any $X,Y$ with $d_{Th}(X,Y) \le K$ and any curve $\alpha$ with $\ell_Y(\alpha)$ sufficiently small, we have
% \begin{align}
%   \label{eq:claim}
%    \ell_X (\alpha) \le f(\ell_Y(\alpha)) 
% \end{align}

\begin{proof}[Proof of Lemma \ref{lem:Thurston-metric-length-bounds}]
%  \ben{State that first ineq follows from def}
%  \jenya{\checkmark}
The lower bound in the lemma follows directly from the definition of the Thurston metric. In fact, if $d_{Th}(X,Y) \leq K$, then
\[
 \ell_Y(\alpha) \leq e^K \ell_X(\alpha).
\]

To prove the upper bound, we seek a hyperbolic geodesic $\gamma$ (not necessarily simple) on $X$ that crosses $\alpha$ and such that 
\begin{align}
  \label{eq:gamma} 
  \ell_X(\gamma) \le C_1 \log (1/\ell_X(\alpha)) + C_2,
\end{align}
where $C_1,C_2$ are constants that depend only on genus.  The above means that when $\alpha$ is short, the length of $\gamma$ is bounded above by the same order of magnitude as the width of the collar given by the Collar Lemma.  

Choose a Bers pants decomposition $\mathcal{P}$ on $X$, i.e. one for which every cuff curve has length at most the Bers constant $B$ (which depends only on genus). 
If $\alpha$ is not one of the cuff curves in $\mathcal{P}$, then, since it is simple, it must intersect one of the cuff curves, which we take to be $\gamma$.  Since $\ell_X(\gamma)\le B$,  we get (\ref{eq:gamma}) by taking $C_2\ge B$.  
Alternatively, if $\alpha$ is one of the cuffs in $\mathcal{P}$, then look at the pairs of pants $P_1,P_2\in \mathcal{P}$ on either side of $\alpha$.
% If $P_1 =  P_2$ then the argument given here still works; the geodesic produced is not simple.  In this case, one could also use the seam connecting alpha to itself, but the hyperbolic geometry argument is slightly different.
Take $\gamma$ to be a curve formed by taking ortho-geodesics on $P_1,P_2$, each with both endpoints on $\alpha$, and  concatenating them together along with some piece of $\alpha$.
We can bound the length of each ortho-geodesic from above in terms of $\ell_X(\alpha)$ using the hyperbolic right-angled pentagon formula, which gives $\ell_X(\gamma) \le C_1\log(1/\ell_X(\alpha)) + B'$, where $B'$ depends only on $B$ in this case. 
%\jenya{must add the piece along $\alpha$ as an additive constant.}
Combining the two cases gives (\ref{eq:gamma}).

Now, since $d_{Th}(X,Y) \le K$,
\[
\ell_X(\gamma) \ge \frac{1}{e^K} \cdot \ell_Y(\gamma).
\]
By the Collar Lemma, since $\gamma$ must cross the entire collar of $\alpha$, when $\ell_Y(\alpha)$ is sufficiently small, we have
\[
\ell_Y(\gamma) \ge \log (1/\ell_Y(\alpha)).
\]
Applying (\ref{eq:gamma}) and the above two inequalities gives
\[
C_1 \log (1/\ell_X(\alpha)) + C_2\ge  \ell_X(\gamma) \ge \frac{1}{e^K} \cdot \ell_Y(\gamma) \ge \frac{1}{e^K}\log (1/\ell_Y(\alpha)),
\]
which can be rearranged to give the desired second inequality in (\ref{eq:claim}).
% \ben{Check that this actually does give the stated inequality.  I did for the ineq I was using originally} 
% \jenya{Still need to do this}

% The computation:
% \jenya{comment out after review}
% \begin{align*}
%  \frac{1}{e^K} \log \frac{1}{\ell_Y(\alpha)}& \leq c_1\log \frac{1}{\ell_X(\alpha)} + c_2  \\
%  \log \frac{1}{\ell_Y(\alpha)}          & \leq c_1 e^K \log \frac{1}{\ell_X(\alpha)} + c_2 e^K \\
%  \frac{1}{\ell_Y(\alpha)}               &\leq e^{c_2e^K} \left( \frac{1}{\ell_X(\alpha)}\right)^{c_1 e^K} \\
%  e^{-c_2 e^K} \ell_Y(\alpha)            & \geq \ell_X(\alpha)^{c_2 e^K} \\
% \end{align*}
% Which gives:
% \[
%  e^{-K} \ell_Y(\alpha) \leq \ell_X(\alpha)  \leq e^{\frac{c_1}{c_2}} \ell_Y(\alpha)^{\frac{1}{c_1 e^K}}
% \]

% $$h(\ell_X(\alpha)) \ge \frac{1}{e^K} g(\ell_Y(\alpha)),$$
% where the function $g$, given by the Collar Lemma, is positive and decreasing and satisfies $g(x) \to \infty$ as $x\to 0$.  The claimed inequality (\ref{eq:claim}) then follows.  
% 
% Given this, we can choose the lengths of our original sequence $Y_m$ to be so widely separated such that we still have the polynomial domination (\ref{eq:bound}) for different plumbing coordinates of the nearby sequence $X_m$ on $V$.  The rest of the proof is then the same as for Teichm\"uller metric.
% 
% If instead we defined coarse density by replacing $d_{Th}(X,Y)$ with $d_{Th}(Y,X)$ in (\ref{eq:coarse}), the argument would be essentially the same, using versions of (\ref{eq:easy}) and (\ref{eq:claim}), with the roles of $X,Y$ reversed, to replace the inequality (\ref{eq:wolpert}) from Wolpert's lemma.  
\end{proof}

%Display bibliography with smaller font size
{\footnotesize
\bibliographystyle{alpha}
  \bibliography{References}%{}
  }

% 
% \bibliography{References}

\end{document}